\theoremstyle{definition}
\newtheorem{defin}{Definition}[section]
\theoremstyle{plain}
\newtheorem{theo}[defin]{Theorem}
\newtheorem{prop}[defin]{Proposition}
\newtheorem{cor}[defin]{Corollary}
\newtheorem{lem}[defin]{Lemma}
\theoremstyle{definition}
\newtheorem{rem}[defin]{Remark}
\def\Q{{\mathbf Q}}
\def\Z{{\mathbf Z}}
\def\Im{ \operatorname{Im} }
\def \End {\operatorname{End} }
\def \Aut {\operatorname{Aut} }
\author{Séverin Philip}
\begin{document}

\title{Fields of definition of abelian subvarieties}
\maketitle
\begin{abstract}
In this paper we study the field of definition of abelian subvarieties $B\subset A_{\overline{K}}$ for an abelian variety $A$ over a field $K$ of characteristic $0$. We show that, provided that no isotypic component of $A_{\overline{K}}$ is simple, there are infinitely many abelian subvarieties of $A_{\overline{K}}$ with field of definition $K_A$, the field of definition of the endomorphisms of $A_{\overline{K}}$. This result combined with earlier work of Rémond gives an explicit maximum for the minimal degree of a field extension over which an abelian subvariety of $A_{\overline{K}}$ is defined with varying $A$ of fixed dimension and $K$ of characteristic $0$.
\end{abstract}

\section{Introduction}

Let $A$ be an abelian variety over a field $K$ of characteristic zero. If $B$ is an abelian subvariety of $A_{\overline{K}}$ we recall that there exists a smallest field $L$ such that $B$ is defined as a subvariety of $A_L$ over $L$. By this, we mean that there is a subvariety $C$ of $A_L$ such that $C_{\overline{K}} =B$. We will call this field $L,$ which is finite over $K$, the field of definition of $B$. We are interested in this paper in the link between the field of definition of the endomorphisms of $A_{\overline{K}}$ that we will note $K_A$ (as in \cite{r}) and $L$. The first remark is that every abelian subvariety $B$ of $A_{\overline{K}}$ is the image of some endomorphism $\varphi$ and thus we have an inclusion $L\subset K_A$. Our goal is to show that equality can and usually does happen. We prove the following theorem.

\begin{theo} \label{maintheo}
Let $A$ be an abelian variety over a field $K$ of characteristic zero, $r\geq 1$ an integer and integers $n_1,\dots, n_r$ greater or equal to $2$ such that $A_{\overline{K}}$ is isogenous to a product $\prod\limits_{i=1}^r C_i^{n_i}$ for simple pairwise non isogenous abelian varieties $C_i$ over $\overline{K}$. For all $r$-uples of integers $(k_1,\dots, k_r)$ with $k_i\in \{1,\dots, n_i-1\}$ for each $i\in \{1,\dots ,r\}$ there are infinitely many abelian subvarieties $B$ of $A_{\overline{K}}$ isogenous to $\prod\limits_{i=1}^r C_i^{k_i}$ with field of definition $K_A$. 
\end{theo}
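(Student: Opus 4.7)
My plan is Galois-theoretic. Set $G := \mathrm{Gal}(K_A/K)$; by the very definition of $K_A$, the group $G$ acts faithfully on $\End^0(A_{\overline{K}}) = \prod_{i=1}^r M_{n_i}(D_i)$, where $D_i := \End^0(C_i)$. As noted in the introduction, every abelian subvariety $B \subset A_{\overline{K}}$ has field of definition contained in $K_A$; more precisely that field equals the subfield of $K_A$ fixed by $\mathrm{Stab}_G(B) \subset G$. Hence it suffices to exhibit infinitely many $B$ of the prescribed isogeny type whose $G$-stabilizer is trivial. Since $G$ is finite this amounts to showing that for every $\sigma \in G \setminus \{1\}$ the set of $\sigma$-fixed $B$'s of the prescribed isogeny type is a proper subset of the full set, and that the finite union over $\sigma \neq 1$ of these ``bad'' subsets leaves an infinite complement.

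After replacing $A$ by an isogenous variety (which does not change $K_A$) I may assume $A_{\overline{K}} = \prod_i C_i^{n_i}$. Since the $C_i$ are pairwise non-isogenous, the central idempotents of $\prod_i M_{n_i}(D_i)$ are intrinsically defined and hence $G$-invariant, so each isotypic factor $C_i^{n_i}$ is $G$-stable. Any $B$ isogenous to $\prod_i C_i^{k_i}$ therefore splits uniquely as $\prod_i B_i$ with $B_i \subset C_i^{n_i}$ isogenous to $C_i^{k_i}$, and $\sigma B = B$ iff $\sigma B_i = B_i$ for every $i$. Faithfulness of the $G$-action on $\prod_i M_{n_i}(D_i)$ supplies, for each $\sigma \neq 1$, at least one index $i(\sigma)$ on which $\sigma$ acts non-trivially.

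The crux is then to parametrize abelian subvarieties of $C_i^{n_i}$ isogenous to $C_i^{k_i}$ by (essentially) the $\overline{K}$-points of the Grassmannian $\mathrm{Gr}(k_i,n_i)$ over $D_i$, and to establish: (a) this parameter space is infinite, because $\overline{K}$ is infinite and $1 \leq k_i \leq n_i - 1$; (b) the induced $G$-action, restricted to any $\sigma$ acting non-trivially on $M_{n_i}(D_i)$, has proper fixed locus on this Grassmannian. For (b), Skolem--Noether represents any non-trivial $\Q$-algebra automorphism of $M_{n_i}(D_i)$ as conjugation by a non-scalar element of $\mathrm{GL}_{n_i}(D_i)$ composed with an automorphism of $D_i$, and the resulting semilinear action on $D_i^{n_i}$ cannot fix every $k_i$-plane when $1 \leq k_i \leq n_i - 1$. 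Combining (a) and (b), for each $\sigma \neq 1$ the ``bad'' set constrains the $i(\sigma)$-th Grassmannian factor to lie in a proper closed subset, and the complement in the infinite product of Grassmannians of the finite union of these constrained product sets is still infinite, yielding infinitely many $B = \prod B_i$ with trivial $G$-stabilizer.

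The main obstacle I anticipate is step (b): translating the faithfulness of $G$ on the endomorphism algebra into genuine movement of subvarieties, i.e.\ showing that no non-scalar semilinear automorphism of $D_i^{n_i}$ can fix the entire Grassmannian of $k_i$-planes when $1 \leq k_i \leq n_i - 1$. A subsidiary point is the passage from a parametrization of isogeny classes of subvarieties to one of actual subvarieties, which requires an integral (lattice) refinement of the Grassmannian picture; one must check that the complement of each $\sigma$-fixed locus remains infinite after this refinement.
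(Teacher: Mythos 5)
Your overall strategy matches the paper's: reduce to trivial $G$-stabilizers for $G=\mathrm{Gal}(K_A/K)$ acting on $\End^0 A_{\overline K}\simeq\prod_i\mathrm M_{n_i}(D_i)$, parametrize subvarieties of prescribed isogeny type by $\Q$-points of Grassmannians over the $D_i$, and use Skolem--Noether plus irreducibility/density to show the union of fixed loci is a proper closed subset. Your anticipated obstacle (b) is precisely what the paper resolves with its Theorem~\ref{couple} (every $f\in\Aut_\Q\mathrm M_n(D)$ is $M\mapsto P\sigma(M)P^{-1}$), Lemma~\ref{classic} (if $P\sigma$ fixes all $k$-planes, $1\le k\le n-1$, then $P$ is a central homothety and $\sigma=\mathrm{id}$), and the closed subscheme $\mathrm{Fix}_f\subsetneq\mathrm{Gr}_{k,n}(D)$. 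Your ``subsidiary point'' about an integral/lattice refinement is a false worry: the correspondence of Proposition~\ref{bijssvar} is already a genuine bijection between abelian subvarieties of $A_{\overline K}$ and right ideals of the rational endomorphism algebra, so the $\Q$-points of $\mathrm{Gr}_{k,n}(D)$ directly enumerate actual subvarieties, not merely isogeny classes.

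However, there is a genuine error in your reduction: the assertion that ``the central idempotents are intrinsically defined and hence $G$-invariant, so each isotypic factor $C_i^{n_i}$ is $G$-stable'' is false. The \emph{set} of primitive central idempotents is $G$-stable, but $G$ may \emph{permute} them; equivalently, Galois may permute the isotypic components $C_i^{n_i}$ among themselves (this forces $n_i=n_j$ and $C_i$ Galois-conjugate to $C_j$, but these constraints are perfectly compatible with the hypotheses --- think of a Weil restriction $A=\mathrm{Res}_{L/K}(E^2)$ with $A_{\overline K}\sim E^2\times (E^\sigma)^2$, where $\mathrm{Gal}(L/K)$ swaps the two factors and necessarily $L\subset K_A$). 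Consequently your decomposition $B=\prod_i B_i$ is not $G$-equivariant factorwise, and the ``bad'' set for a $\sigma$ that transposes factors $i$ and $j$ is the graph of a bijection $\{(B_i,B_j): B_j=\sigma(B_i)\}$ rather than a constraint on a single factor --- so your final ``complement in the product of a finite union of product sets'' argument does not apply as stated. The paper handles exactly this with its combinatorial Lemma~\ref{combin}: writing $g\in G$ as a permutation $\tau_g$ of the indices together with bijections $\lambda_{g,i}$, it builds good tuples $(s_1,\dots,s_r)$ inductively, at each step avoiding both the finitely many values forced by already-chosen coordinates under the off-diagonal $\lambda_{g,i}$ and the proper fixed loci of the diagonal ones. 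Your proof needs this (or an equivalent) bookkeeping to be correct.
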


 The main result of \cite{r} gives a bound on the degree $[K_A:K]$ as a function of the dimension $g$ of $A$, namely we have
 $$[K_A:K]\leq f(g)$$
where $f(g)=2\alpha(g)6^{g-1}g!$ with $\alpha(g)=1$ if $g\notin \{2,4,6\}$ and $\alpha(2)=2$, $\alpha(4)=5$, $\alpha(6)=7/6$.

 The theorem of \cite{r} also states that the bound for $[K_A:K]$ is achieved for some abelian varieties over $\Q$ isomorphic over $\overline{\Q}$ to a power of an elliptic curve. The application of Theorem \ref{maintheo} to this setting gives an explicit value for the supremum of the degrees $[L:K]$ obtained as above in terms of a function of $g$. More precisely we have the corollary.
\begin{cor}
Let $g\geq 2$ be a positive integer and $A$ an abelian variety of dimension $g$ over a field $K$ of characteristic zero. Then for any abelian subvariety $B\subset A_{\overline{K}}$ with field of definition $L$ we have
$$[L:K]\leq f(g).$$
Moreover, there are an abelian variety $A$ of dimension $g$ over $\Q$ and an abelian subvariety $B\subset A_{\overline{\Q}}$ with field of definition $L$ such that
$$[L:\Q]= f(g).$$ 
\end{cor}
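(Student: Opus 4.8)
The upper bound is the easy half. Given any abelian subvariety $B\subset A_{\overline{K}}$, we observed in the introduction that $B$ is the image of an endomorphism of $A_{\overline{K}}$, hence its field of definition $L$ satisfies $L\subset K_A$; combining this with the bound $[K_A:K]\leq f(g)$ from \cite{r} gives $[L:K]\leq f(g)$ immediately. So the content of the statement is the sharpness: one must exhibit a single $A/\Q$ of dimension $g$ together with a subvariety whose field of definition is exactly of degree $f(g)$ over $\Q$.

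Here the plan is to take the abelian variety $A$ furnished by the theorem of \cite{r} that realizes the equality $[K_A:\Q]=f(g)$; by that theorem we may moreover take $A$ to be, over $\overline{\Q}$, a power $E^g$ of an elliptic curve $E$. The point is now to invoke Theorem \ref{maintheo}. For $g\geq 2$ the variety $A_{\overline{\Q}}\sim E^g$ is isotypic with a single simple factor $C_1=E$ and multiplicity $n_1=g\geq 2$, so the hypothesis of Theorem \ref{maintheo} (no isotypic component simple) is satisfied, and we may pick $k_1\in\{1,\dots,g-1\}$ — for instance $k_1=1$. The theorem then produces an abelian subvariety $B\subset A_{\overline{\Q}}$, isogenous to $E^{k_1}$, whose field of definition is exactly $K_A$. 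Since $[K_A:\Q]=f(g)$ by the choice of $A$, this $B$ has field of definition of degree $f(g)$ over $\Q$, which is the required example.

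The only point needing a little care is making sure the $A$ that achieves $[K_A:\Q]=f(g)$ is genuinely covered by the hypotheses of Theorem \ref{maintheo}, i.e. that it is a power $E^g$ with $g\ge 2$ and not, say, a product of several non-isogenous elliptic curves or a variety with a simple isotypic component; but this is exactly the shape of the extremal examples described in \cite{r} (``isomorphic over $\overline{\Q}$ to a power of an elliptic curve''), so no new work is needed. One should also note the harmless point that ``isomorphic'' gives ``isogenous'', which is all Theorem \ref{maintheo} requires. Thus the main — and essentially only — obstacle has already been removed by Theorem \ref{maintheo}; the corollary is a direct assembly of that theorem with the results of \cite{r}.
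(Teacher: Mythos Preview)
Your proposal is correct and follows exactly the paper's approach: the upper bound comes from $L\subset K_A$ together with the bound of \cite{r}, and sharpness is obtained by applying Theorem \ref{maintheo} to the extremal abelian varieties of \cite{r}, which are powers $E^g$ of an elliptic curve with $g\ge 2$. The paper's own proof is the one-line remark that the corollary is a direct consequence of Theorem \ref{maintheo} applied to the abelian varieties of Theorem 1.1 of \cite{r}; you have simply written out the details.
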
 

This result complements Proposition 4.2 of \cite{gb2019}, stating that the bound is sharp.

The first part of the paper is devoted to some preliminaries on linear algebra over skew fields. Specifically we study the automorphisms of $\mathrm{M}_n (D)$ for a skew field $D$ and their natural action on the right vector subspaces of $D^n.$ In order to do so we will show that such an automorphism is always of the form $f\sigma$ where $f$ is the conjugation by an element of $\mathrm{GL}_n(D)$ and $\sigma$ is an automorphism of $D$. 

We will use this to define an action on the grassmannian variety $\mathrm{Gr}_{k,n}(D)$ for an automorphism of $\mathrm{M}_n(D)$ that is compatible with the natural action on the right ideals. 

Finally, the Galois action of $\mathrm{Gal}(\overline{K}/K)$ on subvarieties factors through a finite group $G$ and translates into an action on the ideals of $\End A_{\overline{K}}\otimes \Q$ which from the previous construction yields an action on the grassmannian variety. Using the well-known link between ideals of $\End A_{\overline{K}}\otimes \Q$ and the abelian subvarieties of $A_{\overline{K}}$ the result comes down to the fact that the fixed points of a non trivial element of $G$ are given by the rational points of a closed subvariety properly contained in the grassmannian which is irreducible. The finite union of those sets cannot cover it and by density of the rational points we find many subvarieties that fit our needs.

\section{Automorphisms of simple algebras}

We start by recalling a well-known result on right ideals of a matrix algebra $\mathrm{M}_n(D)$ over a skew field $D$ which is Proposition 13.1 of \cite{saltdiv}. For the rest of this section we fix a skew field $D$ of finite dimension over $\Q$ and a positive integer $n$. Let $e=\{e_1,\dots, e_n\}$ be the canonical basis of $D^n$. We identify matrices in this basis and linear maps.

\begin{prop}\label{corres}
There is a bijective correspondence between right ideals of $\mathrm{M}_n(D)$ and right vector subspaces of $D^n$.
\end{prop}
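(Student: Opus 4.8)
The plan is to prove Proposition~\ref{corres} by constructing the bijection explicitly in both directions and checking they are mutually inverse. For a right vector subspace $V \subseteq D^n$, I would associate the set $I_V = \{M \in \mathrm{M}_n(D) : \Im(M) \subseteq V\}$, where $M$ is viewed as the linear map $D^n \to D^n$ acting on column vectors with $D$ acting on the right; it is immediate that $I_V$ is closed under addition and under right multiplication by arbitrary matrices (since $\Im(MN) \subseteq \Im(M)$), so $I_V$ is a right ideal. Conversely, to a right ideal $I$ I would associate the subspace $V_I = \sum_{M \in I} \Im(M)$, the sum of the images of all its elements; this is a right $D$-submodule of $D^n$, hence a right vector subspace since $D$ is a skew field and submodules are free of well-defined rank.

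The key steps are then: (1) show $V_{I_V} = V$, which is easy since for any $v \in V$ the matrix sending $e_1 \mapsto v$ and $e_j \mapsto 0$ for $j \geq 2$ lies in $I_V$ and has $v$ in its image, giving $V \subseteq V_{I_V}$, while the reverse inclusion is by definition of $I_V$; and (2) show $I_{V_I} = I$, where the inclusion $I \subseteq I_{V_I}$ is immediate and the reverse inclusion $I_{V_I} \subseteq I$ is the substantive point. For the latter I would use a basis argument: choose a basis of $V_I$ and extend to a basis of $D^n$; using the action of $\mathrm{M}_n(D)$ on the right one can show $I$ contains all matrices with image in $V_I$. Concretely, if $M \in I$ has $\Im(M) = V_I$ (such an $M$ exists by taking a finite sum of elements of $I$, since $V_I$ is finitely generated), then for any $N$ with $\Im(N) \subseteq V_I$ one can write $N = M \cdot P$ for a suitable $P \in \mathrm{M}_n(D)$ — this is the statement that a linear map factors through a surjection onto its target — and hence $N \in I$ because $I$ is a right ideal.

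I expect the main obstacle to be the careful handling of the one-sided linear algebra over the skew field $D$: one must consistently fix the convention that $D^n$ is a right $D$-vector space, that matrices act on the left on column vectors, and verify that notions like "image," "rank," "basis extension," and "a map factoring through a surjection" behave exactly as in the commutative case — which they do, but the noncommutativity requires care about which side scalars appear on. In particular the factorization $N = MP$ requires knowing that a surjective $D$-linear map $M: D^n \to V_I$ admits a right inverse as a map of right $D$-modules, which holds because $V_I$ is free (projective), hence the short exact sequence $0 \to \Ker M \to D^n \to V_I \to 0$ splits. Once these linear-algebra facts are in place, the two verifications $V_{I_V} = V$ and $I_{V_I} = I$ are short, and bijectivity follows. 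Since this is stated as a known result (Proposition 13.1 of \cite{saltdiv}), I would keep the argument brief and cite the reference for the details.
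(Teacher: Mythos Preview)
Your proposal is correct and aligns with the paper's treatment: the paper does not prove this proposition but cites \cite{saltdiv} and describes the same maps $V\mapsto I_V$ and $I\mapsto V_I$, recording that $I$ is generated as a right ideal by an idempotent $\phi$ with $\Im\phi=V_I$, which is exactly your factorization $N=MP$ through an element of $I$ with image $V_I$. One minor tightening: the phrase ``taking a finite sum of elements of $I$'' is imprecise since $\Im(M_1+M_2)$ can be strictly smaller than $\Im M_1+\Im M_2$; instead use the right-ideal property to place chosen generators of $V_I$ into distinct columns (this also yields the idempotent the paper mentions).
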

 The correspondence is given in the following way. Let $V$ be a right vector subspace of $D^n$. We map it to the right ideal $I_V$ given by the set of linear maps with image in $V$. Given a right ideal $I$ we have a vector subspace $V_I$ given by the sum of the images of the elements of $I$. Moreover one can prove that there is an idempotent element $\phi\in I$ such that $V_I=\Im \phi$ and $I$ is generated by $\phi$ as a right ideal. From this we deduce the equality
 $$I=\bigoplus\limits_{i=1}^n V_I$$
 which states that $I$ is the set of matrices with elements of $V_I$ as vector columns. \\
 
 Let $\sigma\in \operatorname{Aut}_{\Q}D$. We can naturally extend $\sigma$ to an automorphism of $\mathrm{M}_n(D)$ by letting it act on the entries of the matrices. With the previous description of right ideals of $\mathrm{M}_n(D)$ we get that if $I=\bigoplus\limits_{i=1}^n V_I$ is a right ideal, then
 $$\sigma(I) =\bigoplus\limits_{i=1}^n \sigma(V_I)$$
 so that $\sigma(I)$ is mapped through our bijection to $\sigma(V_I)$.  \\
 
 We now turn to $\operatorname{Aut}_{\Q} \mathrm{M}_n(D)$ to understand its action on the right subspaces of $D^n$. Let $F$ be the centre of $D$, it is a finite extension of $\Q$. We have exact sequences 
$$\xymatrix{1 \ar[r] & \mathrm{Aut}_F \mathrm{M}_n(D) \ar[r] & \mathrm{Aut}_{\Q} \mathrm{M}_n(D) \ar[r] & \mathrm{Aut}_{\Q} F
} ;$$
$$\xymatrix{ 1 \ar[r] & \mathrm{Aut}_F D \ar[r] & \mathrm{Aut}_{\Q}D \ar[r] & \mathrm{Aut}_{\Q}F}.$$
The last arrow on the right is the restriction map. We also have the extension map that makes the following triangle commute
$$\xymatrix{
\mathrm{Aut}_{\Q} \mathrm{M}_n(D) \ar[r] & \mathrm{Aut}_{\Q} F\\
 \mathrm{Aut}_{\Q} D \ar[u]^{Ext} \ar[ur] & \\
}.$$ 
We will show that we get in this way a good set of preimages of the restriction morphism $\operatorname{Aut}_{\Q} \mathrm{M}_n(D)\rightarrow \operatorname{Aut}_{\Q} F$. We start with a lemma.
\begin{lem}
Let $F$ be a field, $\mathfrak{A}$ a simple $F$-algebra and $\sigma\in \operatorname{Aut}_{\Q} F$. There is an automorphism $\Phi$ of $\mathfrak{A}$ such that $\Phi_{|F} =\sigma$ if and only if
$$\mathfrak{A}\otimes_{F^{\sigma}} F \simeq \mathfrak{A}$$
where $F$ is seen as an $F$-algebra via $\sigma$.
\end{lem}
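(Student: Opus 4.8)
The plan is to reinterpret the condition "$\Phi_{|F}=\sigma$" in terms of base change. First I would record the harmless remark that a ring automorphism of $\mathfrak{A}$ automatically fixes $1$, hence restricts to the identity on $\Q$, so $\operatorname{Aut}_{\Q}\mathfrak{A}$ is simply the group of ring automorphisms of $\mathfrak{A}$; thus an automorphism $\Phi$ as in the statement is exactly a ring automorphism of $\mathfrak{A}$ that stabilises $F$ and induces $\sigma$ there. I would then introduce the $\sigma$-twist ${}^{\sigma}\mathfrak{A}$, namely the ring $\mathfrak{A}$ equipped with the $F$-algebra structure obtained by composing its structure map $F\hookrightarrow\mathfrak{A}$ with $\sigma$; by the very definition of extension of scalars this is (the underlying ring of) $\mathfrak{A}\otimes_{F^{\sigma}}F$, the identification being the canonical one. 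So the lemma amounts to the assertion that $\mathfrak{A}$ and ${}^{\sigma}\mathfrak{A}$ are isomorphic as $F$-algebras if and only if some ring automorphism of $\mathfrak{A}$ induces $\sigma$ on $F$.

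Both implications are then a direct check. Given $\Phi$ with $\Phi_{|F}=\sigma$, I would verify that the same underlying map is an isomorphism of $F$-algebras $\mathfrak{A}\to{}^{\sigma}\mathfrak{A}$: it is a ring isomorphism by assumption, and compatibility with the two structure maps $F\to\mathfrak{A}$ and $F\xrightarrow{\sigma}F\to\mathfrak{A}$ is precisely the multiplicativity of $\Phi$ together with $\Phi_{|F}=\sigma$; as $\Phi$ is bijective this yields $\mathfrak{A}\simeq{}^{\sigma}\mathfrak{A}\simeq\mathfrak{A}\otimes_{F^{\sigma}}F$. Conversely, starting from an $F$-algebra isomorphism $\Psi\colon\mathfrak{A}\otimes_{F^{\sigma}}F\xrightarrow{\sim}\mathfrak{A}$, I would compose with the canonical ring isomorphism $\mathfrak{A}\xrightarrow{\sim}\mathfrak{A}\otimes_{F^{\sigma}}F$, $a\mapsto a\otimes 1$, to obtain a ring automorphism $\Phi$ of $\mathfrak{A}$; using that $\Psi$ preserves $1$, that it is $F$-linear for the structure coming from the right tensor factor, and the identity $\iota(\mu)\otimes 1=1\otimes\sigma(\mu)$ (where $\iota\colon F\to\mathfrak{A}$ is the structure map), one computes $\Phi(\iota(\mu))=\iota(\sigma(\mu))$, i.e. $\Phi_{|F}=\sigma$.

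I do not expect a genuine obstacle here; the one place to be attentive is the direction of the twist --- depending on the convention chosen one lands on the statement with $\sigma$ or with $\sigma^{-1}$, and these are equivalent since $\Phi$ may be replaced by $\Phi^{-1}$ and $\Psi$ by $\Psi^{-1}$, but it must be pinned down consistently. I would also remark that simplicity of $\mathfrak{A}$ plays no role in this equivalence itself: it serves only, in the intended application $\mathfrak{A}=\mathrm{M}_n(D)$ with $F$ the centre, to ensure that $F$ is canonically attached to $\mathfrak{A}$ so that restriction to $F$ is defined on all of $\operatorname{Aut}_{\Q}\mathfrak{A}$; and finite-dimensionality over $\Q$ is used only implicitly, to give $\mathfrak{A}\otimes_{F^{\sigma}}F$ a concrete description. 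The substantive consequences --- deciding when ${}^{\sigma}\mathfrak{A}\simeq\mathfrak{A}$ actually holds, via Artin--Wedderburn to pass between $\mathrm{M}_n(D)$ and $D$, combined with the two exact sequences above --- I would defer to the results that follow.
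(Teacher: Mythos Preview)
Your proof is correct and proceeds along the same lines as the paper's: in both directions the bridge between $\Phi$ and the isomorphism is the canonical map $\mathfrak{A}\to\mathfrak{A}\otimes_{F^{\sigma}}F$, $a\mapsto a\otimes 1$, and one checks that composing it with the given isomorphism (respectively, that $\Phi$ itself, read as a map to the twisted algebra) does what is required. The one difference worth recording is how bijectivity of $\Phi$ is obtained in the direction ``isomorphism $\Rightarrow$ automorphism'': the paper invokes simplicity of $\mathfrak{A}$ (a nonzero homomorphism out of a simple algebra is injective, and implicitly finite dimension over $\Q$ then gives surjectivity), whereas you observe directly that $a\mapsto a\otimes 1$ is already a ring isomorphism because $\sigma$ is an automorphism of $F$. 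This makes your version slightly cleaner and substantiates your remark that simplicity plays no role in the equivalence itself.
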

\begin{proof}
First let us assume $\mathfrak{A}\otimes_{F^{\sigma}} F\simeq  \mathfrak{A}.$ By definition of the tensor product there is an algebra homomorphism $\Phi$ such that the following diagram commutes
$$\xymatrix{
\mathfrak{A} \ar[r]^{\Phi} & \mathfrak{A} \\
F \ar[u] \ar[r]^{\sigma} & F \ar[u] 
}$$
the vertical arrows being the inclusions. As $\mathfrak{A}$ is simple and $\Phi$ nonzero it is an automorphism and the diagram gives the desired condition.

Now let $\Phi\in \operatorname{Aut}_{\Q} \mathfrak{A}$ such that $\Phi_{|F}=\sigma$. We show that $\mathfrak{A}$ is the tensor product $\mathfrak{A}\otimes_{F^{\sigma}} F$. Let $C$ be an $F$-algebra given by a map $i\colon F\rightarrow C$ and let $\alpha\colon \mathfrak{A} \rightarrow C$, $\beta \colon F\rightarrow C$ be maps such that the diagram
$$\xymatrix{
  & & C \\
  \mathfrak{A} \ar@/^1pc/[urr]^{\beta} \ar[r]^{\Phi} & \mathfrak{A}\ar@{-->}[ur] & \\
  F \ar[u] \ar[r]^{\sigma} & F\ar[u]\ar@/_1pc/[uur]_{\alpha} &
}$$
commutes. It is an easy check that $\beta\circ \Phi^{-1}$ is the dotted arrow and thus the proof is complete.
\end{proof}

Let $\sigma$ be an automorphism of $F$ such that there is a $\Phi\in \operatorname{Aut}_{\Q} \mathrm{M}_n(D)$ with $\Phi_{|F}=\sigma$. By the lemma we have an isomorphism 
$$\mathrm{M}_n(D) \simeq  \mathrm{M}_n(D)\otimes_{F^{\sigma}} F \simeq \mathrm{M}_n(D\otimes_{F^{\sigma}} F)$$
and so we get $D\simeq D\otimes_{F^{\sigma}} F$ by Theorem 1.9 of Chapter 8 of \cite{sch}. Using the lemma again we conclude that $\sigma$ lifts to an automorphism of $D$. 

From this remark we can now choose a set $\Sigma$ of representatives of the lifts of automorphisms of $F$ to $\mathrm{M}_n(D)$. Furthermore we can impose that for all $\sigma$ in the image of the map $\operatorname{Aut}_{\Q} \mathrm{M}_n(D) \rightarrow \operatorname{Aut}_{\Q} F$ there is $\tau \in \Sigma$ such that $\tau= \operatorname{Ext} \theta$ for some lift $\theta \in \operatorname{Aut}_{\Q} D$ of $\sigma$ to $D$.

We can now state the theorem we aimed at.

\begin{theo}\label{couple}
Let $f\in \operatorname{Aut}_{\Q} \mathrm{M}_n(D)$. There are a $P\in \mathrm{GL}_n(D)$ and a unique $\sigma\in \Sigma$ such that for all $M\in \mathrm{M}_n(D)$ we have
$$f(M)= P\sigma(M)P^{-1}.$$
\end{theo}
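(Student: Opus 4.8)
The plan is to reduce to the classical Skolem–Noether theorem by peeling off the action on the centre $F$. Given $f \in \operatorname{Aut}_{\Q} \mathrm{M}_n(D)$, its restriction to the centre $F$ of $\mathrm{M}_n(D)$ (which coincides with the centre of $D$) is an element $\sigma_0 \in \operatorname{Aut}_{\Q} F$ lying in the image of the restriction map. By the choice of the set $\Sigma$ made just before the statement, there is a (unique) $\sigma \in \Sigma$ with $\sigma_{|F} = \sigma_0$, and moreover $\sigma = \operatorname{Ext} \theta$ for some lift $\theta \in \operatorname{Aut}_{\Q} D$; in particular $\sigma$ is itself an automorphism of $\mathrm{M}_n(D)$ acting entrywise, with $\sigma_{|F} = \sigma_0$. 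First I would form $g := f \circ \sigma^{-1} \in \operatorname{Aut}_{\Q} \mathrm{M}_n(D)$ and observe that $g$ fixes $F$ pointwise, i.e. $g \in \operatorname{Aut}_F \mathrm{M}_n(D)$.

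The main step is then Skolem–Noether: since $\mathrm{M}_n(D)$ is a central simple $F$-algebra and $g$ is an $F$-algebra automorphism, there exists $P \in (\mathrm{M}_n(D))^{\times} = \mathrm{GL}_n(D)$ with $g(M) = P M P^{-1}$ for all $M$. Composing back, $f(M) = g(\sigma(M)) = P\,\sigma(M)\,P^{-1}$, which is exactly the asserted form. I would state Skolem–Noether explicitly as the cited input (it is the natural companion to the Proposition 13.1 / Theorem 1.9 references already invoked), since everything else here is bookkeeping.

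For uniqueness of $\sigma$: if $f(M) = P\sigma(M)P^{-1} = Q\tau(M)Q^{-1}$ with $\sigma, \tau \in \Sigma$, then restricting to $F$ (where conjugation is trivial since $F$ is central) gives $\sigma_{|F} = \tau_{|F}$. Because distinct elements of $\Sigma$ were chosen to be representatives of \emph{distinct} automorphisms of $F$ — i.e. $\Sigma$ maps injectively to $\operatorname{Aut}_{\Q} F$ under restriction — this forces $\sigma = \tau$. (The element $P$ is of course only unique up to the centralizer of the image, i.e. up to $F^{\times}$, but the statement only claims existence of $P$.)

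I do not expect a serious obstacle: the only thing to be careful about is making sure the set $\Sigma$ really has been set up so that (a) every liftable $\sigma_0 \in \operatorname{Aut}_{\Q} F$ has a representative in $\Sigma$ that is genuinely an automorphism of $\mathrm{M}_n(D)$ (guaranteed by the preceding discussion, via $\operatorname{Ext}$ of a lift to $D$), and (b) restriction to $F$ is injective on $\Sigma$, which is what makes $\sigma$ unique. Both are exactly the properties imposed when $\Sigma$ was chosen, so the proof is essentially "apply Skolem–Noether to $f\sigma^{-1}$ and read off uniqueness on the centre."
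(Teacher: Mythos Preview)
Your proposal is correct and follows essentially the same route as the paper: restrict $f$ to the centre $F$, pick the representative $\sigma\in\Sigma$ with the same restriction, apply Skolem--Noether to the $F$-linear automorphism $f\sigma^{-1}$ to produce $P$, and read off uniqueness of $\sigma$ by restricting the identity $f(M)=P\sigma(M)P^{-1}$ to central elements. The only cosmetic difference is that the paper tests uniqueness on the specific matrices $M=xI$ with $x\in F$, whereas you phrase it as ``conjugation is trivial on the centre''; these are the same observation.
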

\begin{proof}
Let $f_{|F}=\sigma$, it is an automorphism of $F,$ and denote again $\sigma$ the lift of $\sigma$ to $\mathrm{M}_n(D)$ that is in $\Sigma$. As $f_{|F}=\sigma_{|F}$ we have $f\sigma^{-1} \in \operatorname{Aut}_F \mathrm{M}_n(D)$. By the theorem of Skolem-Noether there is a matrix $P\in \mathrm{GL}_n(D)$ such that $f\sigma^{-1}$ is given by conjugation by $P$. This gives the existence of the decomposition.

For the unicity, take $M=xI$ for $x\in F$. We have
$$f(M)=P\sigma(M)P^{-1} = \sigma(x)I$$
so that if $\tau \in \Sigma$ is another lift that works we get $\tau_{|F}=\sigma_{|F}$.
\end{proof}

In the rest of the text we will choose a pair $(P,\sigma)\in \mathrm{GL}_n(D)\times \Sigma$ given by the theorem for any $f\in \operatorname{Aut}_{\Q} \mathrm{M}_n(D)$. For such an $f$ we have a natural action on ideals of $\mathrm{M}_n(D)$ given by $I\mapsto f(I)$. By Lemma \ref{corres} this action induces an action on the vector subspaces of $D^n$ mapping a subspace $V$ to the subspace associated with the ideal $f(I_V)$. If $f$ is given by $(P,\sigma)$ we get $f(V)=P\sigma(V)$ where $P$ acts on the left.  

 We end this section by a useful lemma (that is classical in the context of vector spaces over fields) that tells us when this action is trivial.

\begin{lem}\label{classic}
Let $P\in \mathrm{GL}_n(D)$, $\sigma\in \Sigma$ and $k\in \{1,\dots, n-1\}$. If for all right vector subspaces $V$ of $D^n$ of dimension $k$ we have $P\sigma(V)=V$ then $P$ is a central homothety and $\sigma$ is the identity. 
\end{lem}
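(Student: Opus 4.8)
The plan is to reduce the statement about all $k$-dimensional subspaces to the extreme case $k=1$ (lines through the origin), and separately extract from the hypothesis that $\sigma$ must be trivial and $P$ central. First I would observe that the condition "$P\sigma(V)=V$ for all $V$ of dimension $k$" is self-dual in a suitable sense: for a right subspace $V$ of dimension $k$, one has $PV = V$ iff $P$ preserves the corresponding right ideal, and the lattice of right subspaces behaves well under taking "complementary" subspaces of dimension $n-k$. Concretely, given any line $\ell = wD$ one can find $k$-dimensional subspaces $V_1, V_2$ with $V_1 \cap V_2 = \ell$ (this needs $k \geq 1$ and $n-k \geq 1$, i.e. $k \in \{1,\dots,n-1\}$, which is exactly our hypothesis, together with $n \geq 2$); since $P\sigma$ preserves each $V_j$ and $\sigma$ is a ring automorphism hence $\sigma(V_1 \cap V_2) = \sigma(V_1)\cap\sigma(V_2)$, we get $P\sigma(\ell) = \ell$ for every line $\ell$. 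So without loss of generality we are reduced to the case $k=1$.

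Next, assume $P\sigma(wD) = (Pw)D = wD$ for every nonzero $w \in D^n$. Applying this to the canonical basis vectors $e_i$ gives $P e_i = e_i d_i$ for some $d_i \in D^\times$, so $P$ is the diagonal matrix $\mathrm{diag}(d_1,\dots,d_n)$ acting on the left; note this already uses only the subspace condition, not $\sigma$, because $\sigma$ fixes each $e_i$. Then applying the condition to $w = e_i + e_j$ for $i \neq j$ forces $d_i = d_j$ (up to a right scalar, worked out from $(e_i d_i + e_j d_j)D = (e_i + e_j)D$), so all the $d_i$ are equal to a single $d \in D^\times$, i.e. $P = dI$ is a scalar matrix. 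Finally, for a general nonzero $w = \sum e_i w_i$, the condition $P\sigma(w)D = wD$ reads $\bigl(\sum e_i\, d\,\sigma(w_i)\bigr)D = \bigl(\sum e_i w_i\bigr)D$, meaning there is $c \in D^\times$ with $d\,\sigma(w_i) = w_i c$ for all $i$; varying $w$ over vectors with prescribed coordinates shows that left multiplication by $d$ composed with $\sigma$ agrees with some right multiplication on all of $D$, and a short computation (plug in $w_i = 1$ to get $c = d\sigma(1) = d$, then $d\sigma(x) = xd$ for all $x \in D$) yields both that $\sigma = \mathrm{int}_d$ is conjugation by $d$ and that $d$ is central. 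But $\sigma \in \Sigma$ was chosen as a representative whose restriction to the centre $F$ is an automorphism of $F$; conjugation by any element of $D$ is the identity on $F$, and an inner automorphism of $D$ lying in $\Sigma$ must then be the chosen representative of $\mathrm{id}_F$, which is $\mathrm{id}_D$. Hence $\sigma = \mathrm{id}$, and then $d\sigma(x) = dx = xd$ for all $x$ forces $d \in F$, so $P = dI$ is a central homothety.

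I expect the main obstacle to be the bookkeeping in the last step: over a skew field one must be careful that "$PV = V$ for the right subspace $V = wD$" only gives $Pw = wc$ for some $c$ depending on $w$, not a clean eigenvalue equation, so the argument that all these right scalars are forced to be equal (and independent of $w$) to a single central element needs to be done by testing enough vectors $w$ and exploiting that $\sigma$ is additive and multiplicative. A secondary subtlety is invoking the normalization of $\Sigma$ correctly to conclude $\sigma = \mathrm{id}$ rather than merely "$\sigma$ is inner"; this is where the setup paragraph preceding Theorem \ref{couple} (imposing that $\Sigma$ contains a lift coming from $\operatorname{Aut}_\Q D$, and uniqueness of $\sigma \in \Sigma$ given its restriction to $F$) gets used. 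Everything else — reducing to lines, diagonalizing $P$ — is the "classical" vector-space argument and should go through essentially verbatim once the skew-field care is taken.
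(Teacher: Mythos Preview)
Your reduction from arbitrary $k$ to $k=1$ has a gap: the claim that any line $\ell$ can be realised as $V_1\cap V_2$ for \emph{two} $k$-dimensional subspaces is false when $2k-1>n$. Indeed $\dim(V_1\cap V_2)=2k-\dim(V_1+V_2)\geq 2k-n$, so for example with $n=4$, $k=3$ any two $3$-dimensional subspaces of $D^4$ meet in dimension at least $2$, never in a line. The condition you wrote down ($k\geq 1$ and $n-k\geq 1$) is not the right one. The fix is easy: either observe that any line is an intersection of \emph{finitely many} $k$-dimensional subspaces (extend $\ell$ to a basis and intersect the coordinate $k$-planes through $\ell$), or---and this is what the paper does---argue by induction on $k$, using that any $(k-1)$-dimensional subspace is the intersection of two $k$-dimensional ones whenever $k\leq n-1$, so the hypothesis for $k$ descends to the hypothesis for $k-1$.

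Once you are at $k=1$, your argument is essentially identical to the paper's: test on $e_i$ to see that $P$ is diagonal, on $e_i+e_j$ to see all diagonal entries equal some $d\in D^\times$, and on $e_1+e_2x$ to extract $d\,\sigma(x)=xd$ for all $x\in D$; this makes $\sigma$ inner, hence $\sigma_{|F}=\mathrm{id}$, hence $\sigma=\mathrm{id}$ by the uniqueness built into $\Sigma$, and then $dx=xd$ forces $d$ central. (Minor slip: from $d\sigma(x)=xd$ one gets $\sigma=\mathrm{int}_{d^{-1}}$, not $\mathrm{int}_d$, but this is irrelevant to the conclusion.)
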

\begin{proof}
We argue by induction on $k$. If $k=1$ as $\sigma$ fixes the canonical basis $(e_i)_{i\in\{1,\dots,n\}}$ of $D^n$ we have $P(e_i)=e_i \lambda_i$ for some $\lambda_i \in D^{\times}$. For $1\leq j\leq n $ there exists $\mu_j \in D^{\times}$ such that
$$P\sigma(e_1+e_j)=(e_1+e_j)\mu_j=e_1\lambda_1 +e_j\lambda_j$$
by the hypothesis. It follows that $\mu_j=\lambda_1=\lambda_j$ and that $P$ is the left multiplication by $\lambda=\lambda_1$ so an homothety. Let $x\in D$ and consider the line $L=(e_1+e_2x)D$. We have $P\sigma(L)=L$ and $P\sigma(L)=\lambda \sigma(e_1+e_2x)D$. The vector $e_1+e_2x$ is on that line, which gives $\mu\in D^{\times}$ with
$$e_1+e_2x=(e_1\lambda+e_2\lambda \sigma(x))\mu = e_1 \lambda \mu + e_2\lambda \sigma(x) \mu.$$
Using again the fact that the $(e_i)$ form a basis we get $\mu=\lambda^{-1}$ and $\lambda \sigma(x) \mu =x$. Hence we get $\sigma_{|F}= \mathrm{id}$ so that $\sigma=\mathrm{id}$. This now gives $x=\lambda x\lambda^{-1}$ for all $x\in D$ which is the fact that $P$ is a central homothety.

We now assume $k>1$. Let $V,U$ be two right vector subspaces of $D^n$ of dimension $k$. We have $P\sigma(V)=V$ and $P\sigma(U)=U$ so
$$P\sigma(V)\cap P\sigma(U)=U\cap V$$
and as $P\sigma$ is a bijection 
$$U\cap V= P\sigma(U\cap V).$$
The result follows from the induction as any subspace of dimension $k-1$ is the intersection of two subspaces of dimension $k$.   
\end{proof}

The converse of the statement is obviously true.

\section{The grassmannian}

In this section we recall a construction of the grassmannian over a skew field. It is closely related to Severi-Brauer varieties as defined in \cite{saltdiv} chapter 13.

Let us consider the algebraic group $\mathrm{GL}_{dn}$ over $\Q$ where $d$ is the degree $[D:\Q]$. We will repeatedly use the following fact (compare with Propositions 4.4, 4.5 and 4.6 p. 246 of \cite{demgab}) :

{\em if $G$ is an algebraic group over a characteristic $0$ field $k$ and $S$ a subgroup of $G(k)$ that is closed for the induced topology, there exists a unique closed algebraic subgroup $H$ of $G$ such that $H(k)=S$ and $H(k)$ is dense in $H$. }

 Through the choice of a $\Q$-basis of $D,$ the group $\mathrm{GL}_n(D)$ is a closed subgroup of the $\Q$-points of $\mathrm{GL}_{dn}$ and by the fact there is a unique algebraic subgroup of $\mathrm{GL}_{dn},$ that we write $\mathrm{GL}_{n,D},$ such that its $\Q$-points are $\mathrm{GL}_n(D)$ and they are dense in $\mathrm{GL}_{n,D}$. 

There is a natural action of $\mathrm{GL}_n(D)$ on the right vector subspaces of $D^n$. Let $V_0$ be a right vector subspace of dimension $k$ of $D^n$. The stabilizer $\operatorname{Stab} V_0$ is a closed subgroup of $\mathrm{GL}_{n,D}(\Q)$ and the fact applies so that $\operatorname{Stab} V_0$ is the $\Q$-points of a unique closed subgroup of $\mathrm{GL}_{n,D}$ that we write $\operatorname{Stab} V_0$ again. We define the Grassmann variety $\mathrm{Gr}_{k,n}(D)$ to be the quotient $\mathrm{GL}_{n,D}/ \operatorname{Stab} V_0$. In particular it is an irreducible variety over $\Q$ and its $\Q$-points $\mathrm{Gr}_{k,n}(D)(\Q)=\mathrm{GL}_{n,D}(\Q)/ (\operatorname{Stab} V_0)(\Q)$ are dense in it. The last equality can be shown as in the proof of Proposition 13.2 of \cite{saltdiv}.

Let $I$ be an ideal of $\mathrm{M}_n(D),$ $f\in \operatorname{Aut}_{\Q} \mathrm{M}_n(D)$ associated to $(P,\sigma)$ and $V$ the right vector subspace corresponding to $I$, assume $V$ is of dimension $k$. We turn the action of $f$ on $V$ into an action of $f$ on $\mathrm{Gr}_{k,n}(D)$. Let $\varphi \in\mathrm{GL}_n(D)$ be such that $\varphi(V_0)=V$ and $\varphi_{\sigma}\in \mathrm{GL}_n(D)$ such that $\sigma(V_0)=\varphi_{\sigma}(V_0)$. We have that
$$f(V)=P\sigma(V)=P\sigma(\varphi)\varphi_{\sigma} V_0$$
as we can check explicitly by taking a basis of $V_0$.

It follows that we have a map
$$\begin{array}{cccccc}
a_f\colon & \mathrm{GL}_n(D) & \longrightarrow & \mathrm{GL}_n(D) &\longrightarrow & \mathrm{Gr}_{k,n}(D)(\Q) \\
& \varphi & \longmapsto &P\sigma(\varphi)\varphi_{\sigma} & \longmapsto & \overline{P\sigma(\varphi)\varphi_{\sigma}} 
\end{array}$$
This map comes from an algebraic $\Q$-morphism $a_f\colon \mathrm{GL}_{n,D}\rightarrow \mathrm{Gr}_{k,n}(D)$. We now check that it is constant on the equivalence classes so that it induces a map on the quotient. For $\Q$-points $\varphi$ and $\psi$ of $\mathrm{GL}_{n,D}$ in the same class we have by definition
$$\varphi(V_0)=\psi(V_0)$$
and so by construction $a_f(\varphi)=a_f(\psi)$. The result follows as the $\Q$-points are dense. From this we get a map $\overline{a_f}\colon \mathrm{Gr}_{k,n}(D)\rightarrow \mathrm{Gr}_{k,n}(D)$ such that if $I$ is an ideal of $\mathrm{M}_n(D)$ associated to $\overline{\varphi}$ then $f(I)$ is associated to $\overline{a_f}(\overline{\varphi})$.

We can thus define a closed subscheme $\operatorname{Fix}_f$ of $\mathrm{Gr}_{k,n}(D)$ as the equalizer of $\overline{\rm{id}}$ and $\overline{a_f}$ which represents the points of the grassmannian fixed by the action of $f$. By Lemma 2.4 this subscheme is properly contained in $\mathrm{Gr}_{k,n}(D)$ whenever $f\neq \rm{id}$. We have a description of the $\Q$-points of this subscheme as
$$\{v\in \mathrm{Gr}_{k,n}(D)(\Q) \mid f\cdot v=v\}.$$

\section{Existence of abelian subvarieties with $K_A$ as field of definition}

First we recall the correspondence between ideals of the algebra $\End A_{\overline{K}}\otimes \Q$ and abelian subvarieties of $A_{\overline{K}}$ for an abelian variety $A$ over a characteristic zero field $K$.  

\begin{prop}\label{bijssvar}
Let $A$ be an abelian variety over a field $K$.  There is a bijective correspondence between right ideals of $\End A_{\overline{K}} \otimes \Q$ and abelian subvarieties of $A_{\overline{K}}$ in the following way. An abelian subvariety $B$ of $A_{\overline{K}}$ is associated with the ideal 
$$I_B= \{\varphi \in \End A_{\overline{K}} \mid \Im \varphi \subset B\} \otimes \Q$$
and to a right ideal $I$ of $\End A_{\overline{K}} \otimes \Q$ we associate the abelian subvariety 
$$B_I= \sum\limits_{\varphi \in I\cap \End A_{\overline{K}}} \Im \varphi.$$
\end{prop}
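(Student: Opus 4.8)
The plan is to check that the assignments $B \mapsto I_B$ and $I \mapsto B_I$ are well defined and mutually inverse; the whole argument rests on Poincaré's complete reducibility theorem, equivalently on the semisimplicity of the finite-dimensional $\Q$-algebra $\End A_{\overline{K}} \otimes \Q$. First I would check well-definedness. For an abelian subvariety $B$, the set $\{\varphi \in \End A_{\overline{K}} \mid \Im \varphi \subset B\}$ is stable under right composition because $\Im(\varphi\psi) \subset \Im \varphi$, so tensoring with $\Q$ yields a right ideal $I_B$. Conversely, for a right ideal $I$, every element of $I$ becomes an honest endomorphism after multiplication by a suitable nonzero integer, which does not change its image since multiplication by $n$ on $A_{\overline{K}}$ is an isogeny; choosing finitely many elements of $I \cap \End A_{\overline{K}}$ that span $I$ over $\Q$, one sees that $B_I$ is in fact the finite sum of their images. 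As the image of an endomorphism is an abelian subvariety and a sum of abelian subvarieties is again an abelian subvariety (being the image of the addition morphism from the product), $B_I$ is an abelian subvariety of $A_{\overline{K}}$.

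Next I would show $B_{I_B} = B$. The inclusion $B_{I_B} \subset B$ is immediate from the definition of $I_B$. For the reverse, Poincaré reducibility gives an abelian subvariety $B'$ and an isogeny $A_{\overline{K}} \to B \times B'$; composing it with the projection onto $B$ and with the inclusion $B \hookrightarrow A_{\overline{K}}$ produces an element of $\End A_{\overline{K}}$ with image exactly $B$, whence $B \subset B_{I_B}$.

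It remains to prove $I_{B_I} = I$. Again $I \subset I_{B_I}$ is immediate, since by construction every integral element of $I$ has image contained in $B_I$. For the converse I would use semisimplicity of $\End A_{\overline{K}} \otimes \Q$ to write $I = e \cdot (\End A_{\overline{K}} \otimes \Q)$ for some idempotent $e$; clearing denominators, $e$ is represented by an integral endomorphism whose image, call it $B$, is an abelian subvariety, and from the previous paragraph one checks $B_I = B$. Now take $\varphi \in \End A_{\overline{K}}$ with $\Im \varphi \subset B_I = B$; then $\varphi$ factors through the inclusion $B \hookrightarrow A_{\overline{K}}$, while in the isogeny category $e$ is the inclusion $B \hookrightarrow A_{\overline{K}}$ followed by a projection $A_{\overline{K}} \to B$ splitting it. Composing, one obtains $e\varphi = \varphi$ in $\End A_{\overline{K}} \otimes \Q$, so $\varphi \in I$. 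Since $I_{B_I}$ is spanned over $\Q$ by such $\varphi$, we get $I_{B_I} \subset I$, and equality follows.

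I expect this last step, the inclusion $I_{B_I} \subset I$, to be the only delicate point: it is where the semisimple structure of $\End A_{\overline{K}} \otimes \Q$ (equivalently, the existence of isogeny complements) is genuinely used, and it requires some care in translating between actual endomorphisms and morphisms in the isogeny category. Everything else is formal.
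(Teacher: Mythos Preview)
Your argument is correct and self-contained. The paper, by contrast, does not prove the proposition at all: it simply refers to Lemma~2.2 of R\'emond's \emph{Vari\'et\'es ab\'eliennes et ordres maximaux} (with $C=A$) and moves on. So your route is genuinely different in that you actually carry out the verification via Poincar\'e reducibility and the idempotent description of right ideals in the semisimple algebra $\End A_{\overline{K}}\otimes\Q$, whereas the paper outsources the whole thing to a reference. What your approach buys is independence from that reference and an explicit identification of where semisimplicity enters (exactly in the inclusion $I_{B_I}\subset I$, as you note). One small imprecision: when you write ``from the previous paragraph one checks $B_I=B$'', the previous paragraph does not literally give this; you need the short computation that for $\varphi\in I\cap\End A_{\overline{K}}$ one has $e\varphi=\varphi$, hence $n\varphi=(ne)\varphi$ and $\Im\varphi\subset\Im(ne)=B$, together with $ne\in I\cap\End A_{\overline{K}}$ for the reverse inclusion. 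This is routine and does not affect the validity of the proof.
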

\begin{proof}
This can for example be deduced from Lemma 2.2 of \cite{rvaom} with $C=A$.
\end{proof}

There are two important corollaries that will be used for our main result.

\begin{cor} \label{ssvardim2}
Let $A$ be an abelian variety over a characteristic zero field $K$ and $C$ a simple abelian variety over $\overline{K}$ with an isogeny 
$$\varphi \colon A_{\overline{K}} \longrightarrow C^n$$
for some positive integer $n$. 
If $D$ is the skew field $\End C\otimes \Q$ then $\varphi$ identifies $\End A_{\overline{K}} \otimes \Q$ and $\mathrm{M}_n (D)$. An abelian subvariety $B$ of $A_{\overline{K}}$ is isogenous to $C^{\ell}$ where $\ell$ is the dimension of the right vector subspace of $D^n$ associated to $I_B$ by Proposition \ref{corres}.
\end{cor}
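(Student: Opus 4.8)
The plan is to transport the correspondence of Proposition \ref{bijssvar} through the isogeny $\varphi$ and then match it up with the linear-algebra picture of Proposition \ref{corres}. First I would verify that conjugation by $\varphi$ (that is, $\psi \mapsto \varphi \psi \varphi^{-1}$, which makes sense after tensoring with $\Q$ since $\varphi$ becomes invertible) gives an isomorphism of $\Q$-algebras $\End A_{\overline{K}} \otimes \Q \xrightarrow{\sim} \End(C^n) \otimes \Q$, and that the right-hand side is canonically $\mathrm{M}_n(D)$ with $D = \End C \otimes \Q$; this last identification is the standard one coming from $\Hom(C,C)\otimes\Q = D$ together with the decomposition $C^n = \bigoplus_{i=1}^n C$, a matrix acting on column vectors. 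Since $C$ is simple, $D$ is indeed a skew field of finite dimension over $\Q$, so the results of Section 2 apply.

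Next I would track how abelian subvarieties behave under $\varphi$. For an abelian subvariety $B \subset A_{\overline{K}}$, the image $\varphi(B)$ is an abelian subvariety of $C^n$, and since $\varphi$ is an isogeny this assignment is a bijection between abelian subvarieties of $A_{\overline{K}}$ and those of $C^n$ (with inverse $B' \mapsto \varphi^{-1}(B')$, the identity component of the preimage). Under conjugation by $\varphi$, the ideal $I_B = \{\psi \in \End A_{\overline{K}} : \Im \psi \subset B\} \otimes \Q$ is carried to $\{\varphi \psi \varphi^{-1} : \Im \psi \subset B\}$, whose members are exactly the endomorphisms of $C^n$ (up to $\otimes\Q$) with image in $\varphi(B)$; that is, conjugation by $\varphi$ sends $I_B$ to $I_{\varphi(B)}$, compatibly with the bijection of Proposition \ref{bijssvar} on both sides. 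So the whole problem is reduced to the case $A_{\overline{K}} = C^n$, $\End A_{\overline{K}} \otimes \Q = \mathrm{M}_n(D)$.

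Now I would apply Proposition \ref{corres}: the right ideal $I_{\varphi(B)}$ of $\mathrm{M}_n(D)$ corresponds to a right vector subspace $V \subset D^n$, namely $V = \sum_{\psi \in I_{\varphi(B)}} \Im \psi$ where elements of $\mathrm{M}_n(D)$ are read as linear maps $D^n \to D^n$; let $\ell = \dim_D V$. It remains to identify $\varphi(B)$ up to isogeny. By Proposition \ref{corres} one may choose an idempotent $e \in I_{\varphi(B)}$ generating the ideal with $\Im e = V$, and the matrix identification shows that as an abelian subvariety, $\varphi(B)$ is the image of the idempotent $e \in \End(C^n)\otimes\Q$ (after clearing denominators, an honest idempotent up to isogeny); conjugating $e$ by a suitable element of $\mathrm{GL}_n(D)$ one can arrange $V$ to be the span of $\ell$ of the standard basis vectors, i.e. $e$ becomes the projection onto the first $\ell$ factors of $C^n$, whose image is $C^\ell$. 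Hence $\varphi(B)$ is isogenous to $C^\ell$, and therefore so is $B$. The one point requiring a little care — and the main obstacle — is the passage between $\otimes\Q$-idempotents and actual abelian subvarieties: one must check that choosing a different integer multiple clearing denominators, or a different idempotent representative of the ideal, changes the resulting subvariety only within its isogeny class, which is exactly what Proposition \ref{bijssvar} (via Lemma 2.2 of \cite{rvaom}) guarantees. With that in hand the dimension count $\dim B = \dim \varphi(B) = \ell \cdot \dim C$ pins down $\ell$ as claimed.
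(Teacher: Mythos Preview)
Your argument is correct, but it is more elaborate than the paper's. The paper bypasses the idempotent-and-conjugation manoeuvre entirely by observing one clean identification: under the isomorphism $\End A_{\overline{K}}\otimes\Q\simeq\mathrm{M}_n(D)$ induced by $\varphi$, the column space $D^n$ is nothing but $\operatorname{Hom}(C,A_{\overline{K}})\otimes\Q$, and the subspace $V_{I_B}$ corresponding to $I_B$ via Proposition~\ref{corres} is exactly $\operatorname{Hom}(C,B)\otimes\Q$. Granting this, the proof becomes a two-line dimension count: since $C$ is simple, $B$ is isogenous to $C^\ell$ for \emph{some} $\ell$, and then $\operatorname{Hom}(C,B)\otimes\Q\simeq\operatorname{Hom}(C,C^\ell)\otimes\Q\simeq D^\ell$, so $\dim_D V_{I_B}=\ell$. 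Your route---transporting to $C^n$, choosing a generating idempotent, conjugating it to a standard projection by an element of $\mathrm{GL}_n(D)$, and then carefully justifying that the image of the rational idempotent really is $\varphi(B)$ up to isogeny---reaches the same conclusion and has the virtue of being constructive (it actually exhibits the isogeny $B\sim C^\ell$ rather than merely computing $\ell$), but it requires exactly the delicate $\otimes\,\Q$-versus-integral bookkeeping you flag as the main obstacle. The paper's identification $V_{I_B}=\operatorname{Hom}(C,B)\otimes\Q$ sidesteps that obstacle completely.
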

\begin{proof}
We have that $I_B$ is mapped through Proposition \ref{corres} to 
$$V=\operatorname{Hom}(C,B)\otimes \Q.$$
From the fact that $C$ is a simple abelian variety we also have that $B$ is isogenous to $C^{\ell}$ for some $\ell$. We prove that $\ell= \dim_D V$. The isogeny between $B$ and $C^{\ell}$ gives isomorphisms of $\Q$-vector spaces
$$\operatorname{Hom} (C,B) \otimes \Q \simeq \operatorname{Hom}(C,C^{\ell})\otimes \Q\simeq \operatorname{End} (C)^{\ell} \otimes \Q \simeq D^{\ell}.$$
It follows that $V$ and $D^{\ell}$ have the same dimension over $\Q$ and thus the same dimension over $D$.
\end{proof}

\begin{cor}\label{galoisstable}
Let $A$ be an abelian variety over a characteristic zero field $K$ and $B$ an abelian subvariety of $A_{\overline{K}}$. Let $L$ be a finite extension of $K$. The right ideal $I_B$ of $\End A_{\overline{K}}\otimes \Q$ is stable for the action of $\mathrm{Gal}(\overline{K}/L)$ if and only if so is $B$.
\end{cor}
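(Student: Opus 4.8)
The plan is to deduce the corollary from a single equivariance statement: the bijective correspondence of Proposition \ref{bijssvar} between right ideals of $\End A_{\overline{K}}\otimes\Q$ and abelian subvarieties of $A_{\overline{K}}$ commutes with the natural actions of $\mathrm{Gal}(\overline{K}/K)$ on both sides. Recall that $\mathrm{Gal}(\overline{K}/K)$ acts on $A_{\overline{K}}$ by a compatible family of semilinear automorphisms $\sigma_A$, hence on the abelian subvarieties of $A_{\overline{K}}$ and on $\End A_{\overline{K}}$; for $\sigma\in\mathrm{Gal}(\overline{K}/K)$ and $\varphi\in\End A_{\overline{K}}$ one has $\sigma(\varphi)=\sigma_A\circ\varphi\circ\sigma_A^{-1}$, and in particular $\Im(\sigma(\varphi))=\sigma_A(\Im\varphi)=\sigma(\Im\varphi)$. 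The key claim to establish is then
$$I_{\sigma(B)}=\sigma(I_B)$$
for every abelian subvariety $B$ of $A_{\overline{K}}$ and every $\sigma\in\mathrm{Gal}(\overline{K}/K)$.

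To prove the key claim I would write $J_B=\{\varphi\in\End A_{\overline{K}}\mid\Im\varphi\subset B\}$, so that $I_B=J_B\otimes\Q$ by definition. Given $\sigma\in\mathrm{Gal}(\overline{K}/K)$ and $\varphi\in\End A_{\overline{K}}$, set $\psi=\sigma^{-1}(\varphi)$; using $\Im(\sigma(\psi))=\sigma(\Im\psi)$ and the fact that $\sigma$, being an automorphism of $A_{\overline{K}}$, is injective on subvarieties, the condition $\Im\varphi\subset\sigma(B)$ is equivalent to $\Im\psi\subset B$. Hence $\varphi\in J_{\sigma(B)}$ if and only if $\psi\in J_B$, i.e. $J_{\sigma(B)}=\sigma(J_B)$. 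Tensoring with $\Q$, and using that the Galois action on $\End A_{\overline{K}}\otimes\Q$ is the $\Q$-linear extension of the one on $\End A_{\overline{K}}$, gives $I_{\sigma(B)}=\sigma(I_B)$.

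With the equivariance in hand the corollary is immediate. By definition, $B$ is stable under $\mathrm{Gal}(\overline{K}/L)$ means $\sigma(B)=B$ for all $\sigma\in\mathrm{Gal}(\overline{K}/L)$, while $I_B$ is stable under $\mathrm{Gal}(\overline{K}/L)$ means $\sigma(I_B)=I_B$ for all such $\sigma$. By the key claim $\sigma(I_B)=I_{\sigma(B)}$, so for a fixed $\sigma$ the equality $\sigma(I_B)=I_B$ holds if and only if $I_{\sigma(B)}=I_B$, which by the injectivity of $B\mapsto I_B$ in Proposition \ref{bijssvar} holds if and only if $\sigma(B)=B$. Quantifying over $\sigma\in\mathrm{Gal}(\overline{K}/L)$ yields the asserted equivalence.

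The only genuinely delicate point is the identity $\Im(\sigma(\varphi))=\sigma(\Im\varphi)$, i.e. the compatibility of the Galois action on endomorphisms with that on subvarieties; this must be handled carefully but follows formally once one records that both actions are induced by the same semilinear automorphisms $\sigma_A$ of $A_{\overline{K}}$. Everything else is a formal manipulation of the correspondence of Proposition \ref{bijssvar}, and the restriction that $L/K$ be finite plays no role in the argument.
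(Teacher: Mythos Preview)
Your proof is correct and follows essentially the same approach as the paper: both establish that the bijection of Proposition \ref{bijssvar} is $\mathrm{Gal}(\overline{K}/K)$-equivariant via the identity $\sigma(\Im\varphi)=\Im\sigma(\varphi)$, and then deduce the corollary formally. The only cosmetic difference is that the paper verifies equivariance in the form $\sigma(B_I)=B_{\sigma(I)}$ while you verify the inverse direction $I_{\sigma(B)}=\sigma(I_B)$.
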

\begin{proof}
We show that the bijection from Theorem \ref{bijssvar} is compatible with the action of the Galois group $\mathrm{Gal}(\overline{K}/K)$. 

For $\sigma\in \mathrm{Gal}(\overline{K}/K)$ and $\varphi\in \End A_{\overline{K}}$ we have
$$\sigma(\Im \varphi) =\Im \sigma(\varphi)$$
since
$$\sigma(\Im \varphi) =\sigma (\{\varphi(x) \mid x\in A(\overline{K}) \}) =\{ \sigma(\varphi) (\sigma(x)) \mid x\in A(\overline{K}) \}= \Im \sigma(\varphi).$$
This gives directly $\sigma(B_I)=B_{\sigma(I)}$, hence the statement.
\end{proof}

We now start working towards the proof of our main theorem. The geometrical part of the proof is contained in the following statement.

\begin{theo}\label{existence}
Let $D$ be a skew field of finite dimension over $\Q$ and $G$ a finite subgroup of $\operatorname{Aut}_{\Q} \mathrm{M}_n(D)$. Then for all $k\in \{ 1,\dots, n-1\}$ there are infinitely many right ideals $I$ of $\mathrm{M}_n(D)$ associated to vector subspaces $V$ of dimension $k$ which are stable by no element of $G\setminus\{1\}$.  

\end{theo}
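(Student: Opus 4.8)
The plan is to realize the Grassmann variety $\mathrm{Gr}_{k,n}(D)$ together with the $G$-action constructed in Section 3, and then argue by a dimension/density count that the fixed loci of nontrivial elements cannot exhaust the rational points. Concretely, for each $f \in G\setminus\{1\}$ we have from Section 3 the algebraic morphism $\overline{a_f}\colon \mathrm{Gr}_{k,n}(D)\to \mathrm{Gr}_{k,n}(D)$ encoding the action on ideals of dimension $k$, and the closed subscheme $\operatorname{Fix}_f$ defined as the equalizer of $\overline{\mathrm{id}}$ and $\overline{a_f}$, whose $\overline{\Q}$-points are exactly the $k$-dimensional subspaces $V$ with $P_f\sigma_f(V)=V$, where $(P_f,\sigma_f)$ is the pair attached to $f$ by Theorem \ref{couple}.

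First I would invoke Lemma \ref{classic}: since $f\neq \mathrm{id}$, the pair $(P_f,\sigma_f)$ is not a central homothety together with the identity, so there exists at least one $k$-dimensional right subspace $V$ with $P_f\sigma_f(V)\neq V$. Hence $\operatorname{Fix}_f$ is a proper closed subscheme of the irreducible variety $\mathrm{Gr}_{k,n}(D)$, so $\dim \operatorname{Fix}_f < \dim \mathrm{Gr}_{k,n}(D)$. Taking the (finite) union $\bigcup_{f\in G\setminus\{1\}} \operatorname{Fix}_f$, this is a proper closed subset of $\mathrm{Gr}_{k,n}(D)$, again by irreducibility — a finite union of proper closed subsets of an irreducible variety is proper.

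Next I would use the density of rational points: $\mathrm{Gr}_{k,n}(D)(\Q)$ is dense in $\mathrm{Gr}_{k,n}(D)$ (this was established in Section 3, following the proof of Proposition 13.2 of \cite{saltdiv}). Since the complement $U = \mathrm{Gr}_{k,n}(D)\setminus \bigcup_{f\neq 1}\operatorname{Fix}_f$ is a nonempty open subset, $U(\Q)$ is nonempty, and in fact infinite: a nonempty Zariski-open subset of an irreducible positive-dimensional $\Q$-variety with a dense set of rational points contains infinitely many rational points (for instance, if it were finite one could remove those finitely many points and still have a nonempty open set meeting no rational point, contradicting density). Each such rational point $\overline{\varphi}\in U(\Q)$ corresponds via the bijection of Section 3 to a right ideal $I$ of $\mathrm{M}_n(D)$ with associated subspace $V$ of dimension $k$, and by construction $f(I)\neq I$ for every $f\in G\setminus\{1\}$, which is the desired conclusion.

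The main obstacle is making sure the "positive-dimensional / infinitely many" part is not vacuous, i.e. that $\mathrm{Gr}_{k,n}(D)$ genuinely has positive dimension for $k\in\{1,\dots,n-1\}$ so that a proper closed subset really does omit infinitely many rational points. This follows from the identification of $\mathrm{Gr}_{k,n}(D)$ with a twisted form of the ordinary Grassmannian (it becomes $\mathrm{Gr}(k\cdot\mathrm{ind}(D), n\cdot\mathrm{ind}(D))$ over $\overline{\Q}$, which has positive dimension precisely when $1\le k\le n-1$), together with the fact that over $\overline{\Q}$ a positive-dimensional variety has infinitely many points and density of $\Q$-points transfers this. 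One should also double-check that $\operatorname{Fix}_f$ is genuinely closed and that the equalizer description indeed cuts out the set-theoretic fixed locus on $\overline{\Q}$-points, but both are immediate from the construction of $\overline{a_f}$ as a $\Q$-morphism of separated schemes.
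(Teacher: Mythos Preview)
Your proposal is correct and follows essentially the same route as the paper: use the $\operatorname{Fix}_f$ subschemes from Section~3, invoke Lemma~\ref{classic} to see they are proper closed when $f\neq\mathrm{id}$, take the finite union, and conclude by irreducibility and density of $\Q$-points. The paper's own proof is in fact terser, packaging the Lemma~\ref{classic} step into a reference to Section~3; your extra remarks on positive dimension and on why ``infinitely many'' follows are welcome elaborations but not a different strategy. One small slip: the fixed-point description $P_f\sigma_f(V)=V$ applies to the $\Q$-points of $\operatorname{Fix}_f$ (right $D$-subspaces of $D^n$), not to its $\overline{\Q}$-points, but since your argument ultimately runs on $\Q$-points this does not affect anything.
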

\begin{proof}
By Theorem \ref{couple} we may write elements of $G$ as couples $(P,\sigma)$ where $P\in \mathrm{GL}_n(D)$ and $\sigma\in \Sigma$. A right ideal $I$ is stable by an element $g\in G$ if and only if $g\cdot I= P\sigma(I)=I$ hence if and only if $P\sigma(V_I)=V_I$. Consider the set
$$S=\{ v\in \mathrm{Gr}_{k,n}(D)(\Q) \mid \exists g\in G\setminus \{1\},~ g\cdot v=v\}$$
of vector subspaces of dimension $k$ that are stable by at least one non trivial element of $G.$ We have
\begin{align*}S&=\bigcup \limits_{g\in G\setminus\{1\}} \{v\in \mathrm{Gr}_{k,n}(D)(\Q) \mid g\cdot v=v\}\\
&= \bigcup \limits_{g\in G\setminus\{1\}} \mathrm{Fix}_g (\Q).
\end{align*}

By Section 3, $\mathrm{Gr}_{k,n}(D)\setminus \bigcup \limits_{g\in G\setminus\{1\}} \mathrm{Fix}_g$ is a non-empty open subscheme by irreducibility and thus contains infinitely many $\Q$-points by density. These points yield vector subspaces of $D^n$ which in turn yield ideals with the desired property. 
\end{proof}

The last ingredient is an easy combinatorial lemma.

\begin{lem} \label{combin}
Let $S_1,\dots, S_r$ be sets and $H_i\subset \Aut S_i$ subgroups such that for any finite subgroup $G_i\subset H_i$ the set
$$\{ s\in S_i \mid \forall \lambda \in G_i\setminus \{\mathrm{id}\},~ \lambda s\neq s\}$$
is infinite. Then if $G\subset \Aut (\prod\limits_{i=1}^r S_i)$ is a finite subgroup such that for all $g\in G$ there exist $\tau_g \in \mathfrak{S}_r$ and bijections $\lambda_{g,i}\colon S_{\tau_g^{-1}(i)}\rightarrow S_{i}$ with
$$\begin{cases}
g(s_1,\dots, s_r)=(\lambda_{g,1}(s_{\tau_g^{-1}(1)}), \dots, \lambda_{g,r}(s_{\tau_g^{-1}(r)})) \\
\text{if } \tau_g(i)=i \text{ then } \lambda_{g,i} \in H_i
\end{cases}$$
the set 
$$F=\{ s\in \prod\limits_{i=1}^r S_i \mid \forall g\in G\setminus\{\mathrm{id}\}, ~ gs\neq s\}$$
is infinite.
\end{lem}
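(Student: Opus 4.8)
The plan is to decompose the problem according to how elements of $G$ permute the factors. First I would note that each $S_i$ is infinite: applying the hypothesis to the trivial subgroup $G_i=\{\mathrm{id}\}$ of $H_i$ makes the displayed condition vacuous, so the set in question is all of $S_i$. Since the $S_i$ are nonempty, the permutation $\tau_g$ and the bijections $\lambda_{g,i}$ attached to $g$ are uniquely determined by $g$, and composing the two defining formulas shows $g\mapsto\tau_g$ is a group homomorphism $G\to\mathfrak{S}_r$ with $\lambda_{gh,i}=\lambda_{g,i}\circ\lambda_{h,\tau_g^{-1}(i)}$. Let $G_0$ be its kernel, the subgroup of elements acting ``diagonally''; for each $i$ the map $g\mapsto\lambda_{g,i}$ restricts on $G_0$ to a homomorphism into $H_i$ with image a finite subgroup $G_i$ (finite because $G_0\subset G$ is finite), and by hypothesis the set $T_i=\{s\in S_i\mid \lambda s\neq s\ \forall\lambda\in G_i\setminus\{\mathrm{id}\}\}$ is infinite.

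Next I would analyze the fixed loci. For $g\in G_0\setminus\{\mathrm{id}\}$: $g$ is diagonal and nontrivial, so $\lambda_{g,i_0}\neq\mathrm{id}$ for some $i_0$; then for any $s$ with $s_{i_0}\in T_{i_0}$ we get $\lambda_{g,i_0}(s_{i_0})\neq s_{i_0}$, hence $gs\neq s$. So if all coordinates of $s$ lie in the respective $T_i$, then $s$ is moved by every nontrivial element of $G_0$. For $g\in G$ with $\tau_g\neq\mathrm{id}$: choose an index $i_g$ with $j_g:=\tau_g(i_g)\neq i_g$; then $gs=s$ forces $s_{j_g}=\lambda_{g,j_g}(s_{i_g})$, so it suffices to arrange $s_{j_g}\neq\lambda_{g,j_g}(s_{i_g})$ to keep $s$ out of the fixed locus of $g$.

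Then I would construct infinitely many $s\in F$ by choosing $s_1,\dots,s_r$ in this order, keeping $s_k\in T_k$ at each step. When $s_k$ is to be chosen, the only extra constraints come from those $g$ with $\tau_g\neq\mathrm{id}$ for which $k=\max(i_g,j_g)$: if $j_g=k$ the constraint forbids the single value $\lambda_{g,j_g}(s_{i_g})$ (with $s_{i_g}$ already chosen since $i_g<k$), and if $i_g=k$ it forbids the single value $\lambda_{g,j_g}^{-1}(s_{j_g})$ (using that $\lambda_{g,j_g}$ is a bijection and $s_{j_g}$ is already chosen). Since $G$ is finite, only finitely many values of $s_k$ are excluded from the infinite set $T_k$, so the choice can always be made; moreover at the final step infinitely many admissible values remain, yielding infinitely many $s$ that lie in $F$.

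The one genuine difficulty to navigate is the treatment of elements that really permute the factors: one must see that a single coordinate pair $(i_g,j_g)$ suffices for each such $g$, and that processing the coordinates in a fixed linear order converts every resulting constraint into the exclusion of one value of whichever of the two coordinates is chosen later, which is exactly what makes the greedy selection inside the infinite sets $T_i$ succeed. The diagonal part of $G$ is then handled for free by the hypothesis.
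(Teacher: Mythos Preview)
Your proof is correct and follows essentially the same greedy coordinate-by-coordinate construction as the paper: choose each $s_k$ inside an infinite ``free'' set coming from the hypothesis on $H_k$, while excluding the finitely many values forced by elements of $G$ that genuinely permute the factors. The only cosmetic differences are that the paper takes its finite subgroup $G_i\subset H_i$ to consist of all $\lambda_{g,i}$ with $\tau_g(i)=i$ (rather than just those with $\tau_g=\mathrm{id}$), and at step $i$ it excludes $\lambda_{g,i}(s_j)$ for every $g$ with $\tau_g^{-1}(i)=j<i$ rather than pre-selecting one pair $(i_g,j_g)$ per $g$; both variants work for the same reason.
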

\begin{proof}
Set $G_i=\{\lambda\in H_i \mid \exists g\in G,~\tau_g(i)=i, \lambda_{g,i}=\lambda\}.$ This is a finite subgroup of $H_i$ as $G$ is finite. We show that we can build elements of $F$ by induction with infinitely many choices at each step. Assume we have chosen the first $i-1$ coordinates, $s_1,\dots, s_{i-1}$, of a candidate. Then we need to choose $s_i$ such that
\begin{itemize}
\item[(i)] $\forall \lambda\in G_i\setminus \{\mathrm{id}\},~\lambda(s_i)\neq s_i$
\item[(ii)] $\forall j,~1\leq j\leq i-1, ~\forall g\in G$ with $\tau_g^{-1}(i)=j$, $s_i \neq \lambda_{g,i}(s_j).$ 
\end{itemize}
As the second condition removes only finitely many choices ($G$ being finite) and there are infinitely many choices satisfying (i) from the hypothesis on the $H_i,$ we can choose infinitely many $s_i$ that fit our need and conclude by induction on $i$. 
\end{proof}

We are now able to prove Theorem \ref{maintheo}.

\begin{proof}
Let $k_1,\dots,k_r$ be integers satisfying the conditions of the statement.

The absolute Galois group $\mathrm{Gal}(\overline{K}/K)$ acts on the semi-simple algebra $\mathfrak{A}=\End A_{\overline{K}} \otimes \Q$ through its finite quotient $\mathrm{Gal}(K_A/K)$ which identifies with a finite subgroup $G$ of $\operatorname{Aut}_{\Q} \mathfrak{A}$. By Corollary \ref{galoisstable} we are led to look for ideals $I$ of $\mathfrak{A}$ which are stable by no element of $G\setminus\{1\}$. Since $A_{\overline{K}}$ is isogenous to $\prod\limits_{i=1}^r C_i^{n_i}$ we have an isomorphism $\varphi \colon \mathfrak{A}\simeq \prod\limits_{i=1}^r \mathrm{M}_{n_i}(D_i)$ for some skew fields $D_i$. 

Now let $S_i=\{ \text{right ideals } I\subset \mathrm{M}_{n_i}(D_i) \mid  \dim_{D_i} V_I=k_i\}$ and $H_i\subset \Aut S_i$ the subgroup of those bijections induced by automorphisms of the algebra $\mathrm{M}_{n_i}(D_i)$. Theorem \ref{existence} states that these satisfy the conditions of Lemma \ref{combin}. Now we prove that $G$ satisfies the remaining conditions. Let $g\in G$ and let $e_1=(1,0,\dots,0),\dots, e_r=(0,\dots, 0,1)$ be the primitive central idempotents of $\prod\limits_{i=1}^r \mathrm{M}_{n_i}(D_i)$. Let $\tau_g\in \mathfrak{S}_r$ be the permutation induced by $g$ on the $e_i$. As $g$ is an algebra automorphism if $\tau_g(i)=j$ then $g$ induces an isomorphism $M_{n_i}(D_i)\rightarrow M_{n_j}(D_j)$ and so a bijection $S_i\rightarrow S_j$ which we set to be $\lambda_{g,j}$. From this setting we see that Lemma \ref{combin} applies and we get infinitely many ideals $I$ of $\prod\limits_{i=1}^r \mathrm{M}_{n_i}(D_i)$ stable by no element of $G\setminus\{1\}$ and such that 
$$I=I_1\times \cdots \times I_r$$
with $\dim_{D_i} V_{I_i}=k_i$.
Given one such ideal $I$ let $B$ be the abelian subvariety of $A_{\overline{K}}$ given by $\varphi^{-1}(I)$. By Corollary \ref{ssvardim2}, $B$ is isogenous to $\prod\limits_{i=1}^r C_i^{k_i}$ and Corollary \ref{galoisstable} gives that the field of definition of $B$ is $K_A$ by construction. 
\end{proof}

The corollary announced in the introduction is a direct consequence of this result applied to the abelian varieties of Theorem 1.1 of \cite{r}.

\begin{rem}
Let $E$ be the elliptic curve over $\Q$ given by the equation $y^2=x^3-x$ and $C$ the one given by the equation $y^2=x^3+3x-2$. One can check the following equalities
\begin{align*}
\Q_E&=\Q(i),\\
\Q_C&=\Q, \\
\End E_{\overline{\Q}}&=\Z[i],\\
\End C_{\overline{\Q}}&=\Z.
\end{align*}
Consider now the abelian variety $A=E\times C^2.$ We have $\Q_A=\Q(i)$ but there is no abelian subvariety of $A_{\overline{\Q}}$ for which the field of definition is $\Q(i)$. This example proves that one cannot allow one of the indices $n_i$ to be equal to $1$ in the statement of Theorem \ref{maintheo}. Moreover by looking at $A'=E^2\times C^2$ we get that one cannot expect to choose one of the indices $k_i$ to be equal to $n_i$. 
\end{rem}

\end{document}